\newtheorem{theorem}{\textbf{Theorem}}
\newtheorem{lemma}{\textbf{Lemma}}
\newtheorem{corr}{\textbf{Corollary}}
\title{\LARGE \bf
Optimal Distributed Controller Synthesis for Chain Structures: Applications to Vehicle Formations
}
\author{Omid Khorsand, Assad Alam and Ather Gattami% <-this % stops a space
\thanks{This work was supported in part by the Swedish Research Council.}% <-this % stops a space
\thanks{O. Khorsand and A. Gattami are with the Department of Electrical Engineering,
        KTH-Royal Institute of Technology, SE-100 44 Stockholm, Sweden
        {\tt\small khorsand,gattami@kth.se}}%
\thanks{A. Alam is with Research and Development, Scania CV AB, 151 87
        S\"{o}dert\"{a}lje, Sweden
        {\tt\small assad.alam@scania.com}}%
}
\begin{document}

\maketitle
\thispagestyle{empty}
\pagestyle{empty}

%%%%%%%%%%%%%%%%%%%%%%%%%%%%%%%%%%%%%%%%%%%%%%%%%%%%%%%%%%%%%%%%%%%%%%%%%%%%%%%%
\begin{abstract}

We consider optimal distributed controller synthesis for an interconnected system subject to communication constraints, in linear quadratic settings. Motivated by the problem of finite heavy duty vehicle platooning, we study systems composed of interconnected subsystems over a chain graph. By decomposing the system into orthogonal modes, the cost function can be separated into individual components. Thereby, derivation of the optimal controllers in state-space follows immediately. The optimal controllers are evaluated under the practical setting of heavy duty vehicle platooning with communication constraints. It is shown that the performance can be significantly improved by adding a few communication links. The results show that the proposed optimal distributed controller performs almost as well as the centralized linear quadratic Gaussian controller and outperforms a suboptimal controller in terms of control input. Furthermore, the control input energy can be reduced significantly with the proposed controller compared to the suboptimal controller, depending on the vehicle position in the platoon. Thus, the importance of considering preceding vehicles as well as the following vehicles in a platoon for fuel optimality is concluded.

\end{abstract}

%%%%%%%%%%%%%%%%%%%%%%%%%%%%%%%%%%%%%%%%%%%%%%%%%%%%%%%%%%%%%%%%%%%%%%%%%%%%%%%%
\section{INTRODUCTION}
\label{sec:introduction}
The systems to be controlled are, in many application
domains, getting larger and more complex. When there is interconnection
between different dynamical systems, conventional
optimal control algorithms provide a solution where
centralized state information is required. However, it is often
preferable and sometimes necessary to have a decentralized
controller structure, since in many practical problems, the
physical or communication constraints often impose a specific
interconnection structure. Hence, it is interesting to
design decentralized feedback controllers for systems of a
certain structure and examine their overall performance.

The control problem in this paper is motivated by systems, generally referred to as vehicle platooning, involving a chain of closely spaced heavy duty vehicles (HDVs). Information technology is paving its path into the transport industry, enabling the possibility of automated control strategies. Governing vehicle platoons by an automated control strategy, the overall traffic flow is expected to improve \cite{IoannouChien93} and the road capacity will increase significantly \cite{DeSchutter:99}. With radar sensors, each vehicle is able to measure the relative distance and velocity of the preceding vehicle. The radar measurements are conveyed further down the chain of vehicles through wireless communication. By traveling at a close intermediate spacing, the air drag is reduced for each vehicle in the platoon. Thereby, the control effort and inherently the fuel consumption can be reduced significantly. However, as the intermediate spacing is reduced the control becomes tighter due to safety aspects; mandating an increase in control action through additional acceleration and braking. Hence, it is of vast interest for the industry to find a fuel optimal control. Thus, with limited information and control input constraints, the control objective is to maintain a predefined headway to the vehicle ahead based upon local state measurements, which makes it a decentralized control problem.

Decentralized control problems are still intractable in general. One approach has been to classify specific information patterns leading to linear optimal controllers. In \cite{c4}, sufficient conditions are given under which optimal controllers are linear in the linear quadratic setting. An important result was given in \cite{c5} which showed that for a new information structure, referred to as \emph{partially nested}, the optimal policy is linear in the information set. In \cite{rantzer2006}, stochastic linear quadratic control problem was solved under the condition that all the subsystems have access to the global information from some time in the past. \cite{bamie}, showed that the constrained linear optimal decision problem for infinite horizon linear quadratic control, can be posed as an infinite dimensional convex optimization problem, given that the considered system is stable. Control for chain structures in the context of platoons has been studied through various perspectives, e.g., \cite{Bamieh08,Barooah05,RoggeAeyels,BamiehJovanovic05,SudinCook04,Hedrick96,Varaiya93}. It has been shown that control strategies may vary depending on the available information within the platoon. However, communication constraints have not in general been considered in control design for platooning applications.
%Furthermore, a duality result for the distributed estimation and control over graphs under partially nested information pattern has been shown and a solution for distributed estimation has been found in \cite{c81} and \cite{c82}.

The aim of this study is to synthesize controllers for a practical decentralized system composed of $M$ interacting systems over a chain. We minimize a quadratic cost under the partially nested information structure. This problem is known to have a linear optimal policy, \cite{c5} and \cite{c9}. However, most existing approaches do not provide explicit optimal controller formulae and, the order of the controllers can be large \cite{thesis}, which makes the implementation difficult. Some work has been focused on finding numerical algorithms to these problems, \cite{num1} and \cite{num2}. Recently, state-space solutions to the so-called two-player state-feedback $H_2$ version of this problem have been given in \cite{swigart10}. Also, in \cite{shah10}, using concepts from order theory, a control architecture has been proposed for systems having the structure of a partially ordered set. In contrast, we construct conditional estimates based on the information shared among the controllers. Thereby, we show how to decompose the states, control inputs, and as a result, the cost function into independent terms. Having the cost function decomposed into individual pieces, analytical derivation of the optimal controllers follows immediately.

The main contribution of this paper is to introduce a simple decomposition scheme to construct optimal decentralized controllers with low computational complexity for chain structures which is applicable to intelligent transportation systems in terms of automated platooning. Derived from the characteristics of actual Scania HDV's, we present a discrete system model that includes physical coupling with a preceding vehicle. In the context of HDV platooning, we explicitly study systems composed of two and three interconnected subsystems over a chain structure. The proposed control scheme accounts for a constrained communication pattern among the vehicles and hence reduces the communications compared to a centralized information pattern where full state information is available to each controller. We also evaluate the performance of the optimal controllers for a typical scenario in HDV platooning under normal operating conditions, with respect to the imposed information constraints.

The outline of the remainder of this paper is as follows. First we specify the problem that we are considering in Section~\ref{sec2}. Then, the finite and infinite horizon optimal controller formulation for the simplest case, the two-vehicle problem, will be presented in Section~\ref{chapter2p}. In Section~\ref{chapter3p}, we will show how the decomposition scheme can be extended to the case of three interconnected subsystems. We apply the three-vehicle optimal distributed controller to the example of HDV platooning in Section~\ref{sec:Simulations} where we evaluate the proposed controller in comparison with the optimal centralized controller and a suboptimal decentralized controller.

%%%%%%%%%%%%%%%%%%%%%%%%%%%%%%%%%%%%%%%%%%%%%%%%%%%%%%%%%%%%%%%%%%%%%%%%%%%%%%%%
\textbf{Notation}.
We denote a matrix partitioned into blocks by $A=[A_{ij}]$, where $A_{ij}$ denotes the block matrix of $A$ in block position $(i,j).$ The submatrix of $A$ formed by row partitions $i$ through $j$ and column partitions $k$ through $l$ will be denoted by $A[i:j,k:l]$:
$$A{[i:j,k:l]}=\begin{bmatrix}A_{ik} &A_{i(k+1)} &\cdots &A_{il} \\A_{(i+1)k} &A_{(i+1)(k+1)} &\cdots &A_{(i+1)l} \\\vdots &\vdots &\ldots &\vdots \\A_{jk} &A_{j(k+1)} &\cdots &A_{jl} \end{bmatrix}.$$
The expected value of a random variable $x$ is denoted by $\mathbf{E}\{x\}$. The conditional expectation of $x$ given $y$ is denoted by $\mathbf{E}\{x|y\}$. The trace of a matrix $A$ is denoted by $\mathbf{Tr}\{A\}$, and the sequence $x(0),~ x(1),...,~x(t)$, is denoted by $x(0:t)$.
\section{\textsc{System Model and Problem Statement}}
\label{sec2}
In this section we present the physical properties of the
system that we are considering. We state the nonlinear dynamics
of a single vehicle and the model for the aerodynamics,
which induces the physical coupling. Then we present
the linear discrete system model for a heterogeneous HDV
platoon and its associated cost function. Finally, the problem formulation
is given.
\subsection{System Model}
\begin{figure}[t]
\begin{center}
\includegraphics[width=7.4cm]{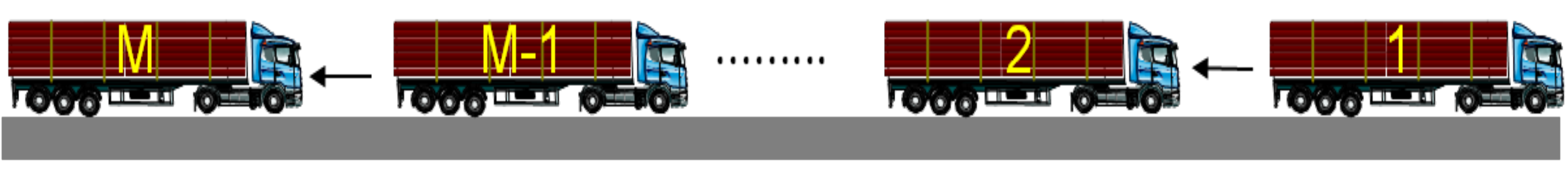}    % The printed column width is 8.4 cm.
\caption{The figure shows a platoon of $M$ heavy duty vehicles, where each vehicle is only able to communicate with the preceding vehicles.}
\label{fig:platoon}
\end{center}
\end{figure}
We consider an HDV platoon as depicted in Figure~\ref{fig:platoon}. The state equation of a single HDV is modeled as \cite{Sahlholm11},

\begin{equation}
	\begin{aligned}
	%\begin{split}
 		\dot{s} 		&= v,\\
		m_t\dot{v}	&=F_{engine} -F_{brake}-F_{air drag}(v)\\
 									& \hspace{15.6mm}				 -F_{roll}(\alpha)-F_{gravity}(\alpha), \\
 									&=k_uu -k_bF_{brake}-k_dv^2 \\
 									& \hspace{11.6mm}-k_{fr}\cos\alpha-k_g\sin\alpha,
		%\end{split}
	\end{aligned}
\label{eq:Forces}
\end{equation}

\noindent where $v$ is the vehicle velocity, $m_t$ denotes the accelerated mass and $u\in \mathbb{R}$ denotes the net engine torque. $k_u, k_b, k_d, k_{fr}$, and $k_g$ denote the characteristic vehicle and environment coefficients for the engine, brake, air drag, road friction, and gravitation respectively.

The aerodynamic drag has a strong impact on an HDV, since it can amount up to 50\,\% of the total resistive forces at full speed. When traveling at short intermediate spacings, the wind resistance is reduced significantly. Hence, a physical coupling is induced between each vehicle in a platoon. To account for the aerodynamics the air drag characteristic coefficient in \eqref{eq:Forces} can be modeled as

\begin{equation*}
\tilde{k}_d=k_d(1-\frac{\Phi(d)}{100}),
\end{equation*}

\noindent where $\Phi(d)=\kappa_1d + \kappa_2$, $0 \leq d \leq 65$ is the longitudinal relative distance between two vehicles, and $\kappa_1, \kappa_2$ are adjusted according to the graphical model given in Figure~\ref{fig:airDrag}.

\begin{figure}[t]
\begin{center}
\includegraphics[width=7.4cm]{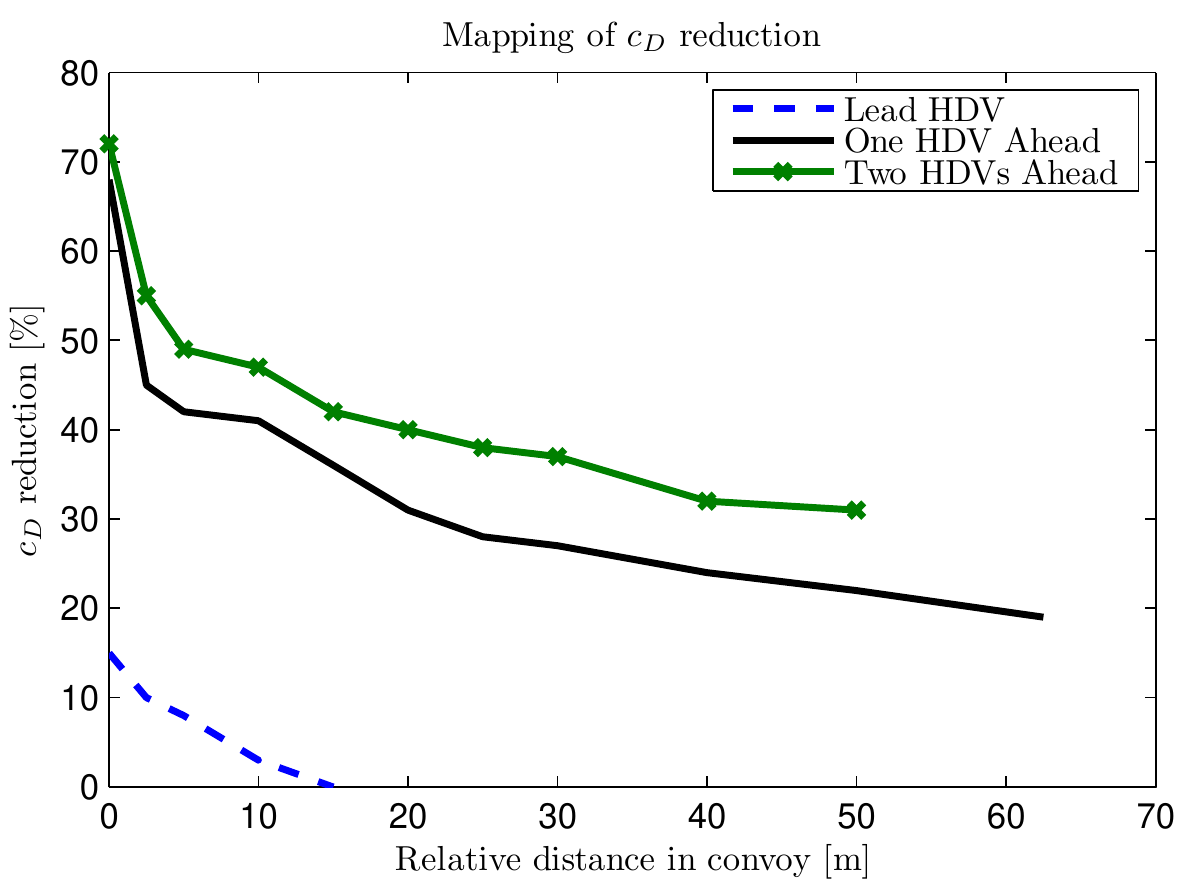}    % The printed column width is 8.4 cm.
\caption{The empirical air drag coefficient $c_D$ as a function of the intermediate spacing $d$. Adapted from \cite{Hucho}. Similar findings are found in \cite{DaimlerChrysler}.}
\label{fig:airDrag}
\end{center}
\end{figure}

The velocities do not deviate significantly for the vehicles with respect to the lead vehicle's velocity in an automated HDV platoon. Thus, a linearized model should give a sufficient description of the system behavior. By linearizing and applying a one step forward discretization to \eqref{eq:Forces}, the discrete model with respect to a set reference velocity, an engine torque which maintains the velocity, a fixed spacing between the vehicles, and a constant slope is hence given by

\begin{equation} \label{system}
x(t+1) = Ax(t) + Bu(t)+w(t),
\end{equation}

\noindent where
{\small{
\begin{align*}
\begin{split}
A&= \begin{bmatrix}{\Theta_1} & 0 & 0 & 0 & 0 & \cdots & 0 & 0 & 0 \\
1 & 1 & -1 & 0 & 0 & \cdots & 0 & 0 & 0 \\
0 & \delta_2 & \Theta_2 & 0 & 0 & \cdots & 0 & 0 & 0 \\
0 & 0 & 1 & 1 & -1 & \cdots & 0 & 0 & 0 \\
0 & 0 & 0 & \delta_3 & \Theta_3 & \cdots & 0 & 0 & 0 \\
\vdots & \vdots & \vdots & \vdots & \vdots & \ddots & \vdots & \vdots & \vdots \\
0 & 0 & 0 & 0 & 0 & \cdots & \Theta_{M-1} & 0 & 0 \\
0 & 0 & 0 & 0 & 0 & \cdots & 1 & 1 & -1 \\
0 & 0 & 0 & 0 & 0 & \cdots & 0 & \delta_M & \Theta_M \\
\end{bmatrix},
\end{split}
\end{align*}
\begin{align}
\begin{split}
B&= \begin{bmatrix} k_{u_1} & 0 & 0 & \cdots & 0 \\
0 & 0 & 0 & \cdots & 0 \\
0 & k_{u_2} & 0 & \cdots & 0 \\
0 & 0 & 0 & \cdots & 0 \\
0 & 0 & k_{u_3} & \cdots & 0 \\
\vdots & \vdots & \vdots & \ddots & \vdots \\
0 & 0 & 0 & \cdots & 0 \\
0 & 0 & 0 & \cdots & k_{u_M} \end{bmatrix}, \quad x=\begin{bmatrix}  {v_1} \\
{d_{12}}  \\
{v_2} \\
{d_{23}} \\
{v_3} \\
\vdots \\
{v_{M-1}} \\
{d_{(M-1)M}} \\
{v_M} \end{bmatrix}, \\
u&=\begin{bmatrix}  {u_1} \\
{u_2}  \\
{u_3} \\
\vdots \\
{u_M} \end{bmatrix}, \begin{array}{rl}
\Theta_1 & =T_s(1-2k_dv_0),\\
\Theta_i & =-T_s2k_d\Phi{(d_0)}v_0, \quad i=2,\dots, M,\\
\delta_i & =-T_s\kappa_1k_dv_0^2,
\end{array}
\end{split}
\label{eq:plantModel}
\end{align}
}}
\noindent where $\delta_i$ denotes the physical coupling with a preceding vehicle and $T_s$ is the sampling time. The derived HDV platoon model in \eqref{eq:plantModel} has a lower block triangular structure, which can generally be stated as

\begin{align}
\hspace{-0.2cm}\begin{bmatrix} x_{1}(t+1) \\
x_{2}(t+1) \\
x_{3}(t+1) \\
\vdots \\
x_{M}(t+1)
 \end {bmatrix}=\begin {bmatrix}A_{11} &  0 & 0 & \cdots & 0\\
 A_{21} & A_{22} & 0 & \cdots & 0\\
 0 & A_{32} & A_{33}  & \cdots & 0\\
\vdots & \vdots & \vdots & \ddots & \vdots \\
0 & 0 & 0  & \cdots & A_{MM}
\end {bmatrix}&\begin{bmatrix} x_{1}(t) \\
 x_{2}(t) \\
 x_{3}(t) \\
\vdots \\
x_{M}(t)
 \end {bmatrix}\nonumber\\
+\begin{bmatrix} B_{1} &  0 &  0 & \cdots & 0\\
 0 & B_{2} &  0 & \cdots & 0\\
 0 &  0 &  B_{3} & \cdots & 0\\
 \vdots &  \vdots &  \vdots & \ddots & \vdots\\
 0 &  0 &  0 & \cdots & B_{M}\\
 \end {bmatrix}
\begin{bmatrix} u_{1}(t) \\
 u_{2}(t) \\
 u_{3}(t)\\
\vdots \\
u_{M}(t)
 \end {bmatrix}+&\begin{bmatrix} w_1(t) \\ w_2(t) \\ w_3(t)\\ \vdots \\ w_M(t)\end{bmatrix}
\label{eq:GeneralPlantModel}
\end{align}

\noindent where the corresponding vehicle states for each subsystem are
\begin{equation*}
x_1(t)=v_1(t),\quad x_i(t)=\begin{bmatrix}d_{i-1,i} \\ v_i
\end{bmatrix},\quad i=2,\dots,M.
\end{equation*}

\subsection{Performance Criteria}
The performance criteria of an HDV platoon can be mapped into quadratic costs. Hence, we formulate the weight parameters for a quadratic cost function based upon performance and safety objectives. The objective of the lead vehicle is to minimize the fuel consumption and control input, while maintaining a set reference velocity. The objective of the follower vehicles in addition, is to follow the preceding vehicles velocity, while maintaining a set intermediate spacing. The intermediate spacing reference could be constant or, as in this case, time varying. It is determined by setting a desired time gap $\tau$\,s, which in turn determines the spacing policy as

\begin{equation*}
d_{ref}(t)={\tau}v(t).
\end{equation*}
\noindent Thereby, the vehicles will maintain a larger intermediate spacing at higher velocities. Hence, the weights for an $M$ HDV platoon can be set up as
{\small{
\begin{align}
\begin{split}
J(u^*)=&\min_{u}~\sum_{t=0}^{N-1}\Big(\sum_{i=2}^{M}w_i^{\tau}(d_{(i-1)i}(t)-\tau v_i(t))^2 \\
&\hspace{13.5mm}+ w_i^{\Delta{v}}(v_{i-1}(t)-v_i(t))^2\\
& \hspace{13.5mm}+w_i^{d}d_{(i-1)i}^2(t)+\sum_{i=1}^{M}w_i^{v}v_{i}^2(t)+w_i^{u_i}u_i^2(t)\Big) \\
=&\min_{u}~\sum_{t=0}^{N-1}\sum_{i=2}^{M}\begin{bmatrix} v_{i-1}(t)\\
d_{(i-1)i}(t)\\
v_i(t)\end{bmatrix}^TQ_i\begin{bmatrix} v_{i-1}(t)\\
d_{(i-1)i}(t)\\
v_i(t)\end{bmatrix}+R_iu_i^2(t)\\
&\hspace{13.5mm}+w_1^{v}v_{1}^2(t)+w_1^{u_1}u_1^2(t)
\end{split}
\label{eq:weights}
\end{align}
}}
\noindent where
{\small{
\begin{align}
\begin{split}
&Q_i=\begin{bmatrix}
w_i^{\Delta{v}} & 0 & -w_i^{\Delta{v}} \\
0 & w_i^{d}+w_i^{\tau} & -\tau{w}_i^{\tau} \\
-w_i^{\Delta{v}} & -\tau{w}_i^{\tau} & \tau^2w_i^{\tau}+w_i^{\Delta{v}}+w_i^{v}
\end{bmatrix},\\
&R_i=w_i^{u_i}.
\end{split}
\label{eq:weightCosts}
\end{align}
}}

The weights in \eqref{eq:weights} give a direct interpretation of how to enforce the objectives for a vehicle traveling in a platoon. The value of $w_i^{\tau}$ determines the importance of not deviating from the desired time gap. Hence, a large $w_i^{\tau}$ puts emphasis on safety. $w_i^{\Delta{v}}$ creates a cost for deviating from the velocity of the preceding vehicle, and $w_i^{u_{i}}$ punishes the control effort which is proportional to the fuel consumption. The following terms, $w_i^{d}, w_i^{v}$, put a cost on the deviation from the linearized states. Note that the main objective is to maintain a set intermediate distance, while maintaining a fuel efficient behavior. Therefore, $w_i^{\tau}, w_i^{\Delta{v}}$ and $w_i^{u_{i}}$ must be set larger than the remaining weights. The weights are chosen such that $Q$ is positive semidefinite and $R$ is positive definite.
\subsection{Problem Formulation}
%Consider a discrete time linear system
%\begin{align} \label{system} x(t+1)&=Ax(t)+Bu(t)+w(t),
%\end{align}
%composed of $M$ interconnected subsystems, where the system matrices are partitioned into blocks as $A=[A_{ij}],~B=[B_{ij}],~i,j=1,...,M$, and $x$, $u$, and $w$ are the overall state, control input and disturbance vectors. These vectos are partitioned as
%\begin{align*}  x(t)&=\begin{bmatrix}x^T_{1}(t)&x^T_2(t)& \cdots &x^T_M(t) \end{bmatrix}^T, \\
%                u(t)&=\begin{bmatrix}u^T_{1}(t)&u^T_2(t)& \cdots &u^T_M(t) \end{bmatrix}^T, \\
%                w(t)&=\begin{bmatrix}w^T_{1}(t)&w^T_2(t)& \cdots &w^T_M(t) \end{bmatrix}^T.
%\end{align*}
%The interconnection structure can be described as follows. If the state $x_j(t)$  affects the state $x_i(t + 1)$, then $A_{ij} \neq 0$, otherwise $A_{ij} = 0$. This structure can be represented by a directed graph of order $M$. The graph has an arrow from node $j$ to $i$ if and only if $A_{ij}\neq 0$.

Although the approach used in this paper is applicable for systems over general acyclic graphs, for simplicity we will concentrate on two simple chain structures, which we refer to as two- and three-vehicle chains.
The aim is to synthesize controllers under imposed communication constraints.

For the two-vehicle chain the system matrices have the sparsity structure as
\begin{align} \label{2p} A=\begin{bmatrix}A_{11} & 0\\A_{21}&A_{22} \end{bmatrix},\hspace{2mm} B=\begin{bmatrix}B_{1} & 0\\0&B_{2} \end{bmatrix}. \end{align}
Assume $\{w(t)\}$ is a sequence of mutually independent Gaussian vectors with zero mean values and covariance given by
\begin{align*}
&\mathbf{E}\{w(k)w^T(l)\}=\begin{bmatrix}W_{1}&0\\0&W_{2}\end{bmatrix}\delta(k-l).
\end{align*}
It is assumed that $x(0)=0.$

In this system, the dynamics of subsystem 1 (Vehicle 1) propagates to subsystem 2 (Vehicle 2) but not vice-versa. If both subsystems have access to the global state measurements the information structure would be classical, and the optimal linear controller could be obtained from the linear quadratic control theory. However, in the practical setting of HDV platooning the lead vehicle only has its own state information, whereas the follower vehicle can also measure the states of the preceding vehicle through radar sensors. Therefore, we consider the case in which $u_2$ has access to the overall measurement history, while $u_1$ has access to its own measurements. Let $\mathbb{I} ^t_i$ denote the information set of controller $i$ at time $t$. Then
\begin{align}\label{constraints}\mathbb{I}^t_1=\{x_1(0:t)\},\hspace{2mm} \mathbb{I}^t_2=\{x(0:t)\}.\end{align}
This information pattern is not classical anymore and is a simple case of a \emph{partially nested} information structure. This is one of a few non-classical information patterns for which the optimal policy is known to be unique and linear in the information set.
For the chain of three vehicles, the matrices are given by
\begin{equation}
A=\begin{bmatrix}A_{11} & 0 & 0\\A_{21}&A_{22}&0\\0&A_{32}&A_{33} \end{bmatrix}, B=\begin{bmatrix}B_{1} & 0 & 0\\0&B_{2}&0\\0&0&B_3 \end{bmatrix}.
\label{eq:AMatrixBlocks}
\end{equation}
\noindent Here, $\{w(t)\}$ is a Gaussian disturbance vector with covariance given by
\begin{align*}
\mathbf{E}\{w(k)w^T(l)\}=\begin{bmatrix}W_{1}&0&0\\0&W_{2}&0\\0&0&W_{3}\end{bmatrix}\delta(k-l).
\end{align*}
To maintain partially nestedness, the information set for the controllers is given by
\begin{equation}\label{infoset}
\begin{array}{rcl}
\mathbb{I}^t_1=\{x_1(0:t)\}, ~~\mathbb{I}^t_2=\{x_1(0:t), x_2(0:t)\},  \\
\mathbb{I}^t_3=\{x_1(0:t), x_2(0:t), x_3(0:t)\}.~~~~~~~~~~~~~~
\end{array}
\end{equation}
where only one communication link is needed from vehicle 1 to vehicle 3, since vehicle 2 and 3 can measure the preceding vehicle states with on-board radar sensors.

Thus, the problem that we solve is finding an analytical formulation for optimal controllers constrained to specified information sets that minimize the infinite-horizon quadratic cost
\begin{equation} \label{cost}
\lim_{N \rightarrow \infty} \frac{1}{N} \mathbf{E} \sum_{t=0}^{N-1} (x^T(t)Qx(t)+u^T(t)Ru(t)),
\end{equation}
subject to the given system dynamics and performance objectives. We first give an explicit solution for the two-vehicle problem defined by \eqref{2p} and \eqref{constraints}, where the intuition behind the solution is derived. To show how the proposed technique can be applied to more general chains, we then present an explicit solution for the three-vehicle problem with dynamics given in~\eqref{eq:AMatrixBlocks} subject to constraints in~\eqref{infoset}.

%Then the extension to three vehicles, defined by \eqref{eq:AMatrixBlocks} and %\eqref{infoset}, is given because adding more vehicles eventually reduces to %the two vehicle problem by decomposition.
%\vspace{0.5 mm}
\section{TWO-VEHICLE CHAIN} \label{chapter2p}
The aim of this section is to present the optimal control synthesis for the simplest case of the problem which is a chain of two vehicles. The derivation given in this section explains the decomposition idea and the structure of the controllers. First, we shall present the optimal controller in Section \ref{main1}. Next, the derivation of the time-varying and the stationary controller will be explained in Section \ref{derivation}. Finally, we conclude with some remarks in Section \ref{remarks1}.
\subsection{\textbf{Main Result}} \label{main1}
\begin{theorem} \label{theorem1}
Assume that
\renewcommand{\theenumi}{\roman{enumi}}
\begin{enumerate}
                \item $(A,B)$ is stabilizable,
                \item $(A_{22},B_{2})$ is stabilizable,
                \item $( {Q}, A)$ is detectable,
                \item $( {Q_{22}}, A_{22})$ is detectable.
\end{enumerate}
Then, the optimal controller for the two-vehicle chain is given by:
\begin{align*}
\eta(t+1)&=(A_{22}-B_2L_{22})\eta(t)+[A_{21}-B_2L_{21}~~ 0]\begin{bmatrix}x_1(t)\\x_2(t)\end{bmatrix} \\
u(t)&=-\begin{bmatrix}L_{12}\\L_{22}-L^2\end{bmatrix}\eta(t)-\begin{bmatrix}L_{11} & 0\\L_{21} &L^2\end{bmatrix}\begin{bmatrix}x_1(t)\\x_2(t)\end{bmatrix}.
\end{align*}
and the optimal cost is
$$\mathbf{Tr}(X_{11}W_1)+\mathbf{Tr}(YW_2).$$
The matrices $X$ and $Y$ are the positive semidefinite stabilizing solutions to the Riccati equations
\begin{align*} X&=A^TXA+Q-A^TXB(B^TXB+R)^{-1}B^TXA, \\
               Y&=A_{22}^TYA_{22}+Q_{22}-A_{22}^TYB_2(B_2^TYB_2+R_{22})^{-1}B^T_{2}YA_{22},
\end{align*}
and the matrix $X$ is partitioned into blocks compatible with the partitions of $A$:
$$X=[X_{ij}],~~i,j=1, 2.$$
The gain matrices $L^1$ and $L^2$ are given by
\begin{align*}
L^1&=(R+B^TXB)^{-1}B^TXA, \\
L^2&=(R_{22}+B_2^TYB_2)^{-1}B_2^TYA_{22},
\end{align*}
and $L^1$ is partitioned into blocks according to
$$L^1=[L_{ij}], ~~i,j=1,2.$$
\
\end{theorem}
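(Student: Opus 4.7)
The plan is to exploit the partially nested information structure via a conditional-estimate decomposition, which turns the constrained problem into two independent unconstrained LQR problems whose solutions are exactly the two Riccati equations in the statement.

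First I would introduce the common estimate $\eta(t):=\mathbf{E}\{x_2(t)\mid\mathbb{I}_1^t\}$ and the private innovation $\tilde x_2(t):=x_2(t)-\eta(t)$, and mirror this on the control side by writing $u_2(t)=\hat u_2(t)+\tilde u_2(t)$, where $\hat u_2(t):=\mathbf{E}\{u_2(t)\mid\mathbb{I}_1^t\}$. Because the $x_1$-dynamics do not depend on $x_2$ or $u_2$, and $u_1$ is $\mathbb{I}_1^t$-measurable by admissibility, controller~2 can reproduce the common signals $(\eta(t),\hat u_2(t))$ from $\mathbb{I}_1^t\subseteq\mathbb{I}_2^t$; this is the content of partial nestedness in this two-player setting and is what allows $\tilde u_2$ to be freely chosen as any $\mathbb{I}_2^t$-measurable, zero-common-conditional-mean signal.

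Next I would derive two decoupled dynamical systems. Taking the conditional expectation of the full state equation given $\mathbb{I}_1^t$ yields
\begin{equation*}
\eta(t+1)=A_{22}\eta(t)+A_{21}x_1(t)+B_2\hat u_2(t),
\end{equation*}
so the pair $(x_1,\eta)$ evolves under the original matrices $(A,B)$ with control $(u_1,\hat u_2)$ and noise input $w_1$ only (because $\eta$, being a conditional expectation, absorbs no fresh innovation when we condition on common information). Subtracting this from the $x_2$-equation, the innovation obeys
\begin{equation*}
\tilde x_2(t+1)=A_{22}\tilde x_2(t)+B_2\tilde u_2(t)+w_2(t),
\end{equation*}
driven purely by $w_2$ and $\tilde u_2$. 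The two systems are orthogonal: $\tilde x_2(t)$ and $\tilde u_2(t)$ have zero conditional mean given $\mathbb{I}_1^t$, while $(x_1,\eta,u_1,\hat u_2)$ are $\mathbb{I}_1^t$-measurable.

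The third step is the cost decomposition. Substituting $x_2=\eta+\tilde x_2$ and $u_2=\hat u_2+\tilde u_2$ into the quadratic stage cost and taking expectations, every cross term of the form $\mathbf{E}\{(\text{common})^{\!\top}M(\text{private})\}$ vanishes by the tower property and the zero-mean property of the innovations. What remains is
\begin{equation*}
J=\underbrace{\lim_{N}\tfrac1N\sum_{t=0}^{N-1}\mathbf{E}\bigl\{\bar x^{\!\top}Q\bar x+\bar u^{\!\top}R\bar u\bigr\}}_{J_c}+\underbrace{\lim_{N}\tfrac1N\sum_{t=0}^{N-1}\mathbf{E}\bigl\{\tilde x_2^{\!\top}Q_{22}\tilde x_2+\tilde u_2^{\!\top}R_{22}\tilde u_2\bigr\}}_{J_p},
\end{equation*}
where $\bar x=(x_1^{\!\top},\eta^{\!\top})^{\!\top}$ and $\bar u=(u_1^{\!\top},\hat u_2^{\!\top})^{\!\top}$. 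Both terms can be minimized separately: $J_c$ is a standard full-information infinite-horizon LQR on $(A,B,Q,R)$ driven only by $w_1$, which by assumptions (i) and (iii) is solved by the Riccati matrix $X$ and gain $L^1$; $J_p$ is a standard LQR on $(A_{22},B_2,Q_{22},R_{22})$ driven by $w_2$, solved by $Y$ and $L^2$ under (ii) and (iv). This immediately yields the optimal cost $\mathbf{Tr}(X_{11}W_1)+\mathbf{Tr}(YW_2)$, where only the $(1,1)$-block of $X$ appears because the common system is noise-free in its $\eta$-component.

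Finally I would reassemble the controller: setting $u_1=-L_{11}x_1-L_{12}\eta$, $\hat u_2=-L_{21}x_1-L_{22}\eta$, and $\tilde u_2=-L^2\tilde x_2=-L^2(x_2-\eta)$ and summing gives the exact block-matrix form in the theorem, and substituting $\hat u_2$ into the $\eta$-recursion produces the stated closed-loop estimator. The main obstacle, in my view, is not the algebra but the clean justification of the restriction to admissible policies of the product form $u_2=\hat u_2+\tilde u_2$: one must argue, using partial nestedness together with the stabilizability/detectability hypotheses, that any admissible $u_2$ admits such a decomposition with $\tilde u_2$ being $\mathbb{I}_2^t$-measurable and zero-common-conditional-mean, and that the two inner minimizations can be carried out independently without coupling through the admissibility constraints. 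Once that separation is in place, everything else reduces to two textbook LQR problems.
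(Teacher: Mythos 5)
Your proposal is correct and follows essentially the same route as the paper: decompose state and input into the conditional estimate given the shared information $x_1(0:t)$ plus the orthogonal innovation, observe that the cost splits into two independent LQR problems solved by the two Riccati equations, and reassemble the gains into the stated output-feedback form. The "main obstacle" you flag is resolved exactly as the paper does it, namely by restricting a priori to linear policies (valid by partial nestedness), using completion of squares to get a lower bound on the decomposed cost, and then exhibiting the explicit admissible policy that attains it.
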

%%%%%%%%%%%%%%%%%%%%%%%%%%%%%%%%%%   %  PROOF   %      %%%%%%%%%%%%%%%%%%%%%%%%%%%%%%%%%%%%%
\

Before giving the proof of the theorem, we need to state the following lemma and corollary.
\begin{lemma}  \label{lemma1}
Consider the system described by (\ref{system}), we introduce the following Riccati equation
\begin{align*}P(t)=&A^TP(t+1)A+Q-A^TP(t+1)B\times \\
                  &(B^TP(t+1)B+R)^{-1}B^TP(t+1)A,\end{align*}
for $t=0,\hdots,N$, with the end condition $P(N) = Q$, where $Q$ is positive semidefinite. Then, \begin{align*}&x^T(N)Qx(N)+\sum_{t=0}^{N-1} (x^T(t)Qx(t)+u^T(t)Ru(t))=   \\
&x^T(0)P(0)x(0)+\sum_{t=0}^{N-1}(u(t)+L(t)x(t))^T  \\
              &\times(B^TP(t+1)B+R)(u(t)+L(t)x(t)) \\
              &+\sum_{t=0}^{N-1}2(w^T(t)P(t+1)(Ax(t)+Bu(t)))\\
              &+\sum_{t=0}^{N-1} w^T(t)P(t+1)w(t)
\end{align*}
where $L(t)$ is given by
$$L(t)=(R+B^TP(t+1)B)^{-1}B^TP(t+1)A.$$
\end{lemma}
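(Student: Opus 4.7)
The proof is a standard discrete-time completion-of-squares argument, so the plan is to execute the telescoping identity that underlies Bellman's equation for LQR. I would introduce the Lyapunov-like quantity $V(t) = x^T(t)P(t)x(t)$ and start by rewriting the cost-plus-terminal-penalty as the telescoping sum
\begin{equation*}
x^T(N)Qx(N) + \sum_{t=0}^{N-1}\bigl(x^T(t)Qx(t) + u^T(t)Ru(t)\bigr) = x^T(0)P(0)x(0) + \sum_{t=0}^{N-1}\bigl(x^T(t)Qx(t) + u^T(t)Ru(t) + V(t+1) - V(t)\bigr),
\end{equation*}
using $P(N)=Q$ to absorb the terminal term. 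The whole proof then reduces to showing that each summand on the right equals the corresponding summand in the lemma's stated identity.

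To do this, I would substitute the dynamics $x(t+1)=Ax(t)+Bu(t)+w(t)$ into $V(t+1)$ and expand. The cross term and the quadratic term in $w(t)$ separate off cleanly and furnish precisely the last two sums in the statement: $2w^T(t)P(t+1)(Ax(t)+Bu(t))$ and $w^T(t)P(t+1)w(t)$. What remains is the deterministic per-step expression
\begin{equation*}
x^T(t)Qx(t) + u^T(t)Ru(t) + (Ax(t)+Bu(t))^T P(t+1)(Ax(t)+Bu(t)) - x^T(t)P(t)x(t).
\end{equation*}

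The core step is to show that this deterministic quantity equals $(u(t)+L(t)x(t))^T(B^TP(t+1)B+R)(u(t)+L(t)x(t))$. I would do this by grouping in $u$: the quadratic form in $u$ has matrix $B^TP(t+1)B + R$, the linear cross term is $2x^T A^T P(t+1) B u$, and the purely quadratic part in $x$ is $x^T(Q + A^T P(t+1) A - P(t))x$. Completing the square in $u$ around the minimizer $-L(t)x(t)$ with $L(t)=(R+B^TP(t+1)B)^{-1}B^T P(t+1)A$ produces a residual $x^T$-term equal to $-A^T P(t+1) B(R+B^TP(t+1)B)^{-1}B^TP(t+1)A$, which by the Riccati recursion for $P(t)$ exactly cancels $Q + A^T P(t+1) A - P(t)$. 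This cancellation is the heart of the lemma and the only place the Riccati equation enters.

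The obstacles here are essentially bookkeeping rather than conceptual: keeping the time indices on $P$ straight (note it is $P(t+1)$ that appears in $L(t)$ and in the weight of the squared term), and being careful that the expansion of $V(t+1)$ absorbs the $w$-dependent pieces before completing the square in $u$. Once these are handled, summing from $t=0$ to $N-1$ and applying $P(N)=Q$ yields the stated identity exactly.
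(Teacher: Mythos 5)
Your argument is correct: the telescoping of $V(t)=x^T(t)P(t)x(t)$, separation of the $w$-terms, and completion of the square in $u$ with the Riccati recursion cancelling the residual $x$-quadratic is exactly the standard proof. The paper does not spell this out but simply defers to \cite{astrom-wittenmark}, which gives this same completion-of-squares argument, so your proposal matches the intended route.
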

\begin{proof}
See for example \cite{astrom-wittenmark}. \end{proof}
\begin{corr} \label{cor1}
Assume that $x(0)=0$, $\{w(t)\}$ is a sequence of uncorrelated Gaussian variables with the covariance $W$, and $w(t)$ is independent of $x(t)$ and $u(t)$. Then,
\begin{align*}&\mathbf{E}\{x^T(N)Qx(N)+\sum_{t=0}^{N-1} (x^T(t)Qx(t)+u^T(t)Ru(t))\}=   \\
              &\mathbf{E}\{ \sum_{t=0}^{N-1}(u(t)+L(t)x(t))^T(B^TP(t+1)B+R)\times  \\
              &(u(t)+L(t)x(t))\}+\sum_{t=0}^{N-1}\mathbf{Tr}(P(t+1)W).\end{align*}
where $L(t)$ and $P(t)$ are given in Lemma \ref{lemma1}.
\end{corr}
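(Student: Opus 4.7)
The plan is to obtain Corollary \ref{cor1} as an almost immediate consequence of Lemma \ref{lemma1} by taking expectations of both sides of the identity and eliminating the terms that vanish under the stated hypotheses. Since Lemma \ref{lemma1} is a deterministic algebraic identity that holds pathwise for any trajectory $\{x(t),u(t),w(t)\}$ generated by \eqref{system}, applying $\mathbf{E}\{\cdot\}$ is valid provided the relevant second moments exist, which they do because $w(t)$ is Gaussian with finite covariance $W$ and $P(t)$ is a fixed matrix.

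First I would note that since $x(0)=0$ is deterministic, the boundary term $x^T(0)P(0)x(0)$ is identically zero and drops out. Next I would handle the cross term: I need to show that for each $t$,
\begin{equation*}
\mathbf{E}\{w^T(t)P(t+1)(Ax(t)+Bu(t))\}=0.
\end{equation*}
The hypothesis says $w(t)$ is independent of $x(t)$ and $u(t)$, so by independence and the tower property this expectation factors as $\mathbf{E}\{w^T(t)\}P(t+1)\mathbf{E}\{Ax(t)+Bu(t)\}$; since $\mathbf{E}\{w(t)\}=0$, the whole term vanishes. (One should verify the independence claim is consistent with causality: because the $w(t)$ form a mutually independent sequence, $x(t)$ is a function of $w(0),\ldots,w(t-1)$ and any admissible $u(t)$ is measurable with respect to information generated by the same past noises, so independence of $w(t)$ from $(x(t),u(t))$ is automatic.)

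Then I would rewrite the remaining quadratic noise term using the scalar identity $w^T(t)P(t+1)w(t)=\mathbf{Tr}(P(t+1)w(t)w^T(t))$, interchange trace and expectation by linearity, and apply $\mathbf{E}\{w(t)w^T(t)\}=W$ to obtain
\begin{equation*}
\mathbf{E}\{w^T(t)P(t+1)w(t)\}=\mathbf{Tr}(P(t+1)W).
\end{equation*}
Summing over $t=0,\ldots,N-1$ yields $\sum_{t=0}^{N-1}\mathbf{Tr}(P(t+1)W)$. Combining all three steps with the remaining control-dependent term, which is already an expectation by definition, produces the stated identity.

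The proof is essentially routine; there is no real technical obstacle. The only point that deserves explicit justification is the vanishing of the cross term, which relies on the causal structure (independence of $w(t)$ from the present state and admissible control) rather than on any special property of the controller. Since this is guaranteed by the lemma's hypothesis that $w(t)$ is independent of $x(t)$ and $u(t)$, the derivation is a direct calculation and I would keep the proof to a few lines.
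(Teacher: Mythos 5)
Your proposal is correct and matches the derivation the paper intends: the corollary is stated as an immediate consequence of Lemma~\ref{lemma1}, obtained by taking expectations, dropping the boundary term via $x(0)=0$, killing the cross term via zero mean and independence of $w(t)$ from $x(t)$ and $u(t)$, and converting $\mathbf{E}\{w^T(t)P(t+1)w(t)\}$ to $\mathbf{Tr}(P(t+1)W)$. Your extra remark that the bilinear cross term vanishes by causality and pairwise independence is a correct and harmless elaboration of the same argument.
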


\subsection{\textbf{Optimal Controller Derivation}} \label{derivation}
Based on the information constraints in (\ref{constraints}), we want to find the controllers restricted to the following structure:
\begin{equation}\label{f12}
\begin{array}{rcl}
u_1(t)&=&f_{1}(x_1(0:t)), \\
u_2(t)&=&f_{2}(x(0:t)),
\end{array}
\end{equation}
where $f_{i}$, $i=1,2$, denote linear functions in their arguments.

To derive the optimal controller, we will first consider a finite-horizon version of the problem with the cost function given by
$$J=\mathbf{E}~x^T(N)Qx(N)+\mathbf{E}\sum_{t=0}^{N-1} (x^T(t)Qx(t)+u^T(t)Ru(t)).$$
To find a structure for the controllers, we decompose the state variable into two independent terms as
$$x(t)=z^1(t)+z^2(t),$$
where $z^1(t):=\mathbf{E}\{x(t)|x_1(0:t)\}$, and $z^2(t):=x(t)-z^1(t)$. The term $z^1$ is the conditional estimate of $x$ given the information shared between the controllers, namely $x_1(0:t)$, and $z^2$ is the estimation error. Let these vectors be partitioned as $z^i(t)=\begin{bmatrix}z^i_1(t)\\z^i_2(t)\end{bmatrix},i=1,2$. Clearly, the first component of $z^1(t)$ is $x_1(t)$. Hence
$$z^1(t)=\begin{bmatrix}x_1(t)\\z^1_2(t)\end{bmatrix}, ~~z^2(t)=\begin{bmatrix}0\\z^2_2(t)\end{bmatrix}.$$

Analogously, the control input is decomposed as $u(t)=u^1(t)+u^2(t)$, where $u^1$ and $u^2$ are independent terms defined by
$$u^1(t):=\mathbf{E}\{u(t)|x_1(0:t)\}, ~~u^2(t):=u(t)-u^1(t).$$

\begin{lemma} \label{lemma2}
The update equations for $z^1$ and $z^2$ are given by:
\begin{align*}
z^1(t+1)&=Az^1(t)+Bu^1(t)+\begin{bmatrix} w_1(t)\\ 0 \end{bmatrix}  \\
z^2(t+1)&=Az^2(t)+Bu^2(t)+\begin{bmatrix} 0 \\ w_2(t) \end{bmatrix}
\end{align*}
\end{lemma}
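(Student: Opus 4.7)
The plan is to exploit two structural facts: (a) the block-triangular dynamics mean that the $x_1$-subsystem evolves autonomously with driving noise $w_1$, and (b) conditioning on $x_1(0{:}t{+}1)$ is the same as conditioning on $(x_1(0{:}t), w_1(t))$, since $u_1(t)$ is $\sigma(x_1(0{:}t))$-measurable and $x_1(t{+}1) = A_{11}x_1(t) + B_1 u_1(t) + w_1(t)$. This reduction is the crux of the argument.

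First I would write the state update blockwise,
\begin{align*}
x_1(t+1) &= A_{11}x_1(t) + B_1 u_1(t) + w_1(t),\\
x_2(t+1) &= A_{21}x_1(t) + A_{22}x_2(t) + B_2 u_2(t) + w_2(t),
\end{align*}
and observe that the sigma-algebra generated by $x_1(0{:}t{+}1)$ equals that generated by $(x_1(0{:}t), w_1(t))$, as argued above. Then by definition $z^1(t+1) = \mathbf{E}\{x(t+1) \mid x_1(0{:}t), w_1(t)\}$, and I would take the conditional expectation term by term in $x(t+1) = Ax(t) + Bu(t) + w(t)$.

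For the first term, $x(t)$ is a function of $w(0{:}t-1)$ (together with $x(0)=0$) and the controls chosen up to time $t-1$, which are themselves measurable with respect to the past information; hence $x(t)$ is independent of $w_1(t)$, so $\mathbf{E}\{Ax(t)\mid x_1(0{:}t),w_1(t)\} = \mathbf{E}\{Ax(t)\mid x_1(0{:}t)\} = Az^1(t)$. The same reasoning applies to $Bu(t)$, giving $Bu^1(t)$. For the driving noise, $w_1(t)$ is trivially measurable and $w_2(t)$ is independent of the conditioning sigma-algebra and has zero mean, so $\mathbf{E}\{w(t)\mid x_1(0{:}t), w_1(t)\} = \bigl[\begin{smallmatrix} w_1(t)\\ 0\end{smallmatrix}\bigr]$. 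Combining these yields the first formula.

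The second equation then follows by subtracting $z^1(t+1)$ from the full dynamics $x(t+1) = Ax(t) + Bu(t) + w(t)$ and using $z^2 = x - z^1$, $u^2 = u - u^1$. The main obstacle (really the only subtlety) is justifying the sigma-algebra equality $\sigma(x_1(0{:}t{+}1)) = \sigma(x_1(0{:}t), w_1(t))$, which in turn relies on the partially nested information constraint that $u_1(t)$ is measurable with respect to $x_1(0{:}t)$; without this, $x_1(t{+}1)$ would not split cleanly into a measurable part plus $w_1(t)$.
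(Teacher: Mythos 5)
Your proposal is correct and takes essentially the same route as the paper's own proof: both rest on the key observation that conditioning on $x_1(0{:}t{+}1)$ is equivalent to conditioning on the independent pair $(x_1(0{:}t), w_1(t))$ (valid because $u_1(t)$ is $x_1(0{:}t)$-measurable), after which the expectation of $Ax(t)+Bu(t)+w(t)$ is evaluated using independence of $w_1(t)$ and $w_2(t)$ from the past, and the $z^2$ equation follows by subtraction. No substantive differences.
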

\begin{proof} See Appendix. \end{proof}
Now, considering $u(t)$ on the form given by (\ref{f12}) we find that
\begin{align*}
u^1(t)&=\mathbf{E}\{u(t)|x_1(0:t)\}\\
          &=\mathbf{E} \left\{ \begin{bmatrix}f_{1}(x_1(0:t))\\f_{2}(x(0:t))\end{bmatrix}\bigg |x_1(0:t) \right\}\\
          &=\begin{bmatrix}f_{1}(x_1(0:t))\\f_{2}(z^1(0:t)) \end{bmatrix},
\end{align*}
where the last equality follows from the fact that
$\mathbf{E}\{f_{2}(x(0:t))|x_1(0:t)\}=f_{2}(\mathbf{E}\{x(0:t)|x_1(0:t)=f_{2}(z^1(0:t))\})$. Thus, $u^2$ has the structure
$$u^2(t)=\begin{bmatrix}0\\f_{2}(z^2(0:t)) \end{bmatrix}.$$
By partitioning these vectors as $u^i=\begin{bmatrix}u^i_1 \\ u^i_2\end{bmatrix}~i=1,2$, it can be seen that $u_1^2(t)=0$, so the control input for subsystem 1 is given as the first component of the vector $u^1$, while subsystem 2's input is separated into the two independent terms, namely $u^1_2$, and $u^2_2$. In other words, we have
$$\begin{bmatrix}u_1(t)\\u_2(t)\end{bmatrix}=\underbrace{\begin{bmatrix}u_1(t)\\u^1_2(t)\end{bmatrix}}_{u^1(t)}+\underbrace{\begin{bmatrix}0\\u^2_2(t)\end{bmatrix}}_{u^2(t)}.$$
Decomposition of the states and inputs into independent terms and having $u^1$ and $u^2$ given as functions of $z^1$ and $z^2$ (which are independent terms) implies that the vectors $\begin{bmatrix}z^1(t)\\u^1(t)\end{bmatrix}$ and $\begin{bmatrix}z^2(t)\\u^2(t)\end{bmatrix}$ are independent. As a result, $J$ can be decomposed as:
\begin{small}
\begin{align*}
&\underbrace{\mathbf{E}(z^1(N))^TQz^1(N)+\hspace{-1mm}\sum_{t=0}^{N-1}(z^1(t))^TQz^1(t)+(u^1(t))^TRu^1(t)}_{J_1} \\
&\hspace{-1mm}+\underbrace{\mathbf{E}(z^2(N))^TQz^2(N)+\hspace{-1mm}\sum_{t=0}^{N-1}\hspace{-1mm}(z^2(t))^TQz^2(t)+\hspace{-1mm}(u^2(t))^TRu^2(t)}_{J_2}
\end{align*}
\end{small}
Note that having $z^2_1$ and $u^2_1$ equal to zero implies that only the second component of $z^2$ is nonzero. The dynamics for this component can be written as
$$z^2_2(t+1)=A_{22}z^2_2(t)+B_2u^2_2(t)+w_2(t).$$
Noting that $w_i(t)$ is independent of $z^i(t)$, $u^i(t)$, we can apply Corollary \ref{cor1} to transform $J_1$ and $J_2$: $J=$
\begin{align}\label{quadratic1}&\mathbf{E}\hspace{-1mm}\sum_{t=0}^{N-1}(u^1(t)+L^1z^1(t))^T(B^TX(t+1)B+R) \times \nonumber \\
              &(u^1(t)+L^1z^1(t))+\mathbf{E}\hspace{-1mm}\sum_{t=0}^{N-1}(u^2_2(t)+L^2z^2_2(t))^T \times \nonumber \\
              &(B_2^TY(t+1)B_2+R_{22})(u^2_2(t)+L^2z^2_2(t))+                                                \nonumber \\
              &\sum_{t=0}^{N-1}(\mathbf{Tr}(X(t+1)\begin{bmatrix}W_1&0\\0&0\end{bmatrix})+\mathbf{Tr}(Y(t+1)W_2)),
\end{align}
where we also used $x(0)=0$.
The matrices $X(t)$ and $Y(t)$ are computed recursively by
\begin{align*} X(t)=&A^TX(t+1)A+Q-A^TX(t+1)B\times \\
                   & (B^TX(t+1)B+R)^{-1}B^TX(t+1)A, \\
                        Y(t)=&A_{22}^TY(t+1)A_{22}+Q_{22}-A_{22}^TY(t+1)B_2\times\\
               &(B_2^TY(t+1)B_2+R_{22})^{-1}B^T_{2}Y(t+1)A_{22},
\end{align*}
with the end conditions $X(N)=Q$, $Y(N)=Q_{22}$. The gain matrices $L^1$ and $L^2$ are given by
\begin{align*}
L^1(t)&=(R+B^TX(t+1)B)^{-1}B^TX(t+1)A, \\
L^2(t)&=(R_{22}+B_2^TY(t+1)B_2)^{-1}B_2^TY(t+1)A_{22}.
\end{align*}
Quadratic minimization of (\ref{quadratic1}) simply gives the optimal inputs $u^{1*}$, and $u_2^{2*}$ as
$$u^{1*}(t)=-L^1(t)z^1(t), ~~u_2^{2*}(t)=-L^2(t)z_2^2(t).$$
Let $X$ partitioned into appropriately sized blocks, $[X_{ij}],~~i,j=1,2$, then the optimal cost becomes,
$$J^*=\sum_{t=0}^{N-1}(\mathbf{Tr}(X_{11}(t+1)W_1)+\mathbf{Tr}(Y(t+1)W_2))$$
To find a mapping from $x$ to $u$, let $L^1$ partitioned into the blocks $[L_{ij}]$ so we get the control action $u^{1}$ on the form
\begin{equation*}\label{12}
    \begin{split}
      &{u_1}^*(t)=-L_{11}x_1(t)-L_{12}z^1_2(t), \\
      &{u^1_2}^*(t)=-L_{21}x_1(t)-L_{22}z^1_2(t),
    \end{split}
\end{equation*}
and the update equation for $z^1_2$ becomes
\begin{align*}\label{z12}
    z^1_2(t+1)&=(A_{22}-B_2L_{22})z^1_2(t)+(A_{21}-B_2L_{21})x_1(t).
\end{align*}
Finally, noting that $z^2_2(t)$ is given by $x_2(t)-z^1_2(t)$, the optimal controller can be rewritten on the form
$$\begin{bmatrix}u_1^*(t) \\ u_2^*(t)\end{bmatrix}=-L^1(t)\begin{bmatrix}x_1(t) \\ z_2^1(t)\end{bmatrix}-\begin{bmatrix} 0\\L^2(t) \end{bmatrix}(x_2(t)-z^1_2(t)).$$
Having derived the time-varying representation for the controllers, we now let $N$ go to infinity and obtain the steady-state form of the controller. Given the pairs $(A,B)$ and $(A_{22},B_{2})$ are stabilizable, and the pairs $( {Q},A)$ and $( {Q_{22}},A_{22})$, are detectable, $X(t)$ and $Y(t)$ converge the unique stabilizing solution to corresponding Riccati equations and as a result, $L^1(t)$ and $L^2(t)$ will tend to the steady-state values $L^1$ and $L^2$ given in Theorem 1. This will yield the controller representation given in the Theorem.

Finally, the optimal cost is computed as
\begin{align*} &\lim_{N \rightarrow \infty} \frac{1}{N}\sum_{t=0}^{N-1}(\mathbf{Tr}(X_{11}(t+1)W_1)+\mathbf{Tr}(Y(t+1)W_2)) \\
&=\mathbf{Tr}(X_{11}W_1)+\mathbf{Tr}(YW_2).
\end{align*}
\subsection{\textbf{Discussion}} \label{remarks1}
The state vector, $\begin{bmatrix}x_1(t)\\x_2(t)\end{bmatrix}$, is fed into the controller by a lower-triangular gain matrix, and hence $u_1$ is not dependent on $x_2$.

Note that $z_2^1(t)$ (same variable as $\eta(t)$ in Theorem 1) is the minimum-mean square estimate of $x_2(t)$ based on $x_1$ that is $\mathbf{E}\{x_2(t)|x_1(0),...,x_1(t)\}$. Therefore, $z_2^2(t)$ represents the error of this estimation.

For convenience, let $\hat{x}_{2|1}(t)$ denote the estimate of $x_2(t)$ based on history of $x_1$ and let $e_{2|1}(t)$ represent the estimation error, then we can write the controllers on a more intuitive form:
\begin{align*}
u_1^*(t)&=-L_{11}x_1(t)-L_{12}\hat{x}_{2|1}(t),                               \\
u_2^*(t)&=-L_{21}x_1(t)-L_{22}\hat{x}_{2|1}(t)-L^2e_{2|1}(t).
\end{align*}
Thus, both controllers use $\hat{x}_{2|1}$ instead of $x_2$ in the form of an optimal centralized control, however, controller 2 contains an additional term which is constructed based on the estimation error $e_{2|1}$.

We see that the order of each controller is equal to the state dimension of subsystem 2. It is easy to see that in a centralized information pattern where the value of $x_2$ is known to controller 1, the error term disappears and the controller reduces to a static gain similar to a classical linear quadratic regulator problem.

%We can now write the closed-loop dynamics of the system on the form:
%\begin{equation*}\label{closed-loop}
%    \begin{bmatrix}
%    x_1(t+1) \\
%    x_2(t+1) \\
%    \hat{x}_{2|1}(t+1)
%    \end{bmatrix}
%    =
%    A_{c} \begin{bmatrix}
%  x_1(t)  \\
%  x_2(t)  \\
%  \hat{x}_{2|1}(t)
%  \end{bmatrix}+
%  \begin{bmatrix}
%  I & 0 \\ 0 & I \\0 & 0 \end{bmatrix}
%  \begin{bmatrix}
%  w_1(t)  \\
%  w_2(t)
%  \end{bmatrix},
%\end{equation*}
%where $A_c=$
%\begin{align*}
%\begin{bmatrix}
%  A_{11} - B_{1}L_{11}               &         0                  & -B_{1}L_{12}                       \\
%  A_{21}  - B_{2}L_{21}  & A_{22} - B_{2}L_2           & - B_{2}(L_{22} - L_2)    \\
%  A_{21}  - B_{2}L_{21}  &         0                  & A_{22}  - B_{2}L_{22}
%\end{bmatrix} .\end{align*}

\section{THREE-VEHICLE CHAIN} \label{chapter3p}
The optimal controller synthesis for the three-vehicle version of the problem will be studied here. This section extends the result of Theorem 1 to three interconnected subsystems. Although the approach is similar, here the information available to the controllers shall be decomposed into three components instead of two, and hence the cost function will be decomposed accordingly. Since the scheme has been explained in detail in Section \ref{chapter2p}, a more concise derivation will be given here.

\subsection{Main Result}
\begin{theorem} \label{theorem2}
Assume that
\renewcommand{\theenumi}{\roman{enumi}}
\begin{small}\begin{enumerate}
                \item $(A,B)$, $(A{[2:3,2:3]},B
                {[2:3,2:3]})$, and $(A_{33},B_3)$ are stabilizable,
                \item $(Q,A),~(Q[2:3,2:3], ~A{[2:3,2:3]})$, and $(Q_{33},A_{33})$ are detectable.
\end{enumerate} \end{small}
Then, the optimal controller for the three-vehicle chain is given by:
\begin{align*}
\begin{bmatrix} \eta_1(t+1) \\ \eta_2(t+1)\end{bmatrix}&=(A-BL^1){[2:3,1:3]}\begin{bmatrix}x_1(t)\\\eta_1(t)\\\eta_2(t)\end{bmatrix} \\
\eta_3(t+1)&=(\tilde{A}-\tilde{B}L^2){[2,1:2]}\begin{bmatrix}x_2(t)-\eta_2(t)\\\eta_3(t)\end{bmatrix} \\
\begin{bmatrix} u_1(t) \\ u_2(t) \\ u_3(t)\end{bmatrix}&=
-L^1 \begin{bmatrix} x_1(t) \\ \eta_1(t) \\ \eta_2(t)\end{bmatrix}
-\begin{bmatrix} 0 \\ L^2\begin{bmatrix} x_2(t)-\eta_1(t) \\ \eta_3(t)\end{bmatrix} \end{bmatrix}  \nonumber \\
-&\begin{bmatrix}0 \\ 0 \\ L^3(x_3(t)-\eta_2(t)-\eta_3(t)) \end{bmatrix},
\end{align*}
and the \emph{optimal cost} is
$$\mathbf{Tr}(X^1_{11}W_1)+\mathbf{Tr}(X^2_{11}W_2)+\mathbf{Tr}(X^3W_3).$$
The matrices $X^1$, $X^2$, and $X^3$ are the positive semidefinite stabilizing solutions to the Riccati equations
\begin{align*} X^1=&A^TX^1A+Q-A^TX^1B(B^TX^1B+R)^{-1}B^TX^1A \\
               X^2=&\tilde{A}^TX^2\tilde{A}+\tilde{Q}-\tilde{A}^TX^2\tilde{B}(\tilde{B}^TX^2\tilde{B}+\tilde{R})^{-1}\tilde{B}^TX^2\tilde{A} \\
               X^3=&A_{33}^TX^3A_{33}+Q_{33} \\
                  -&A_{33}^TX^3B_3(B_3^TX^3B_3+R_{33})^{-1}B_{3}^TX^3A_{33}
\end{align*}
where $\tilde{A}=A{[2:3,2:3]}$, $\tilde{B}=B{[2:3,2:3]}$, $\tilde{Q}=Q{[2:3,2:3]}$ and $\tilde{R}=R{[2:3,2:3]}$.
The matrix $X^1$ is partitioned into blocks according to the partitions of $x$ as
$$X^1=[X^1_{ij}],~~i,j=1,..,3,$$
also, $X^2$ is partitioned according to the dimensions of $x_2$ and $x_3$ as
$$X^2=[X^2_{ij}],~~i,j=1,2.$$
The gain matrices are given by
\begin{align*}
L^1&=(R+B^TX^1B)^{-1}B^TX^1A, \\
L^2&=(\tilde{R}+\tilde{B}^TX^2\tilde{B})^{-1}\tilde{B}^TX^2\tilde{A}, \\
L^3&=(R_{33}+B_3^TX^3B_3)^{-1}B_3^TX^3A_{33}.
\end{align*}
\end{theorem}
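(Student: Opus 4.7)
The plan is to mirror the two-vehicle derivation of Section~\ref{chapter2p}, but now with a three-layer orthogonal decomposition aligned with the nested information sets $\mathbb{I}_1^t \subset \mathbb{I}_2^t \subset \mathbb{I}_3^t$ from~\eqref{infoset}. First I would write
$$x(t) = z^1(t) + z^2(t) + z^3(t),$$
where $z^1(t) := \mathbf{E}\{x(t)\mid \mathbb{I}_1^t\}$, $z^2(t) := \mathbf{E}\{x(t)\mid \mathbb{I}_2^t\} - z^1(t)$, and $z^3(t) := x(t) - \mathbf{E}\{x(t)\mid \mathbb{I}_2^t\}$. These layers are pairwise orthogonal by the tower property, and Gaussianity promotes orthogonality to independence. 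Since $x_1$ is $\mathbb{I}_1^t$-measurable and $(x_1,x_2)$ is $\mathbb{I}_2^t$-measurable, $z^1$ has first block equal to $x_1$, while $z^2$ is supported only on the last two blocks and $z^3$ only on the third. The input is decomposed analogously by setting $u^i(t) := \mathbf{E}\{u(t)\mid \mathbb{I}_i^t\} - \mathbf{E}\{u(t)\mid \mathbb{I}_{i-1}^t\}$ (with $\mathbb{I}_0^t := \emptyset$); partial nestedness guarantees that each $u^i$ respects $\mathbb{I}_i^t$, and the constraint pattern in~\eqref{infoset} forces the same sparsity on $u^i$ as on $z^i$.

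The second step is to verify that, thanks to the lower-triangular sparsity of $A$ and block-diagonal $B$, each layer evolves autonomously as
$$z^i(t+1) = A z^i(t) + B u^i(t) + \tilde{w}^i(t),$$
where $\tilde{w}^i(t)$ is the disturbance whose only nonzero block is $w_i(t)$ in the $i$-th position. The triangular $A$ preserves the zero blocks of $z^2$ and $z^3$ under propagation, so effectively $z^2$ lives on $(\tilde A, \tilde B)$ and $z^3$ on $(A_{33},B_3)$. Since the three pairs $(z^i,u^i)$ are mutually independent with zero mean, cross-terms in the expected quadratic cost vanish, giving $J = J_1 + J_2 + J_3$ with $J_i$ a standard LQ cost with weights $(Q,R)$, $(\tilde Q,\tilde R)$, $(Q_{33},R_{33})$ respectively. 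Applying Corollary~\ref{cor1} to each $J_i$ and minimizing each independently produces feedback laws $u^{i*}(t) = -L^i(t) z^i(t)$ restricted to the nonzero subspace of $z^i$, governed by the three Riccati equations of the theorem; the stabilizability and detectability assumptions then yield convergence to the stationary gains.

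To recover the form stated in the theorem, I would introduce the internal estimator states $\eta_1 := z_2^1$, $\eta_2 := z_3^1$, $\eta_3 := z_3^2$, together with the identities $x_2 - \eta_1 = z_2^2$ and $x_3 - \eta_2 - \eta_3 = z_3^3$, which express every nonzero component of each $z^i$ via signals available to the responsible controller. The update for $(\eta_1,\eta_2)$ then reads off from the lower two rows of the closed-loop $z^1$-dynamics, and the update for $\eta_3$ from the last row of the closed-loop $z^2$-dynamics; substituting all of this into $u^{1*}+u^{2*}+u^{3*}$ produces the feedback equations in the theorem, and the infinite-horizon average of the stochastic terms collapses to $\mathbf{Tr}(X^1_{11}W_1)+\mathbf{Tr}(X^2_{11}W_2)+\mathbf{Tr}(X^3 W_3)$.

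I expect the most delicate step to be confirming that the sparsity patterns of $z^2$ and $z^3$ are truly invariant under the closed-loop flow and that the separated optima $u^{i*}$ are realizable by controllers meeting the original constraint~\eqref{infoset}. Both depend on the interplay between the lower-triangular structure of $A$ and the nesting of $\{\mathbb{I}_i^t\}$; this is precisely where the chain topology enters and what makes the three-way decoupling into independent LQ problems possible.
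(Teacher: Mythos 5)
Your proposal is correct and follows essentially the same route as the paper: the same three-way orthogonal decomposition of state, input, and cost into independent layers, the same reduction of each layer to a standard LQ problem on $(A,B)$, $(\tilde A,\tilde B)$, $(A_{33},B_3)$ via Corollary~\ref{cor1}, and the same reassembly through the estimator states $\eta_1=z_2^1$, $\eta_2=z_3^1$, $\eta_3=z_3^2$. The only cosmetic difference is that you build the decomposition in one shot by telescoping conditional expectations over the nested sets $\mathbb{I}_1^t\subset\mathbb{I}_2^t\subset\mathbb{I}_3^t$, whereas the paper first splits off $z^1=\mathbf{E}\{x(t)\mid x_1(0:t)\}$ and then recursively applies the two-vehicle scheme to the residual $\tilde z$ conditioned on the innovation $z_2^2(0:t)$; since $\sigma(\mathbb{I}_2^t)=\sigma(x_1(0:t),z_2^2(0:t))$ with independent generators, the two constructions produce the same $z^2$ and $z^3$.
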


\subsection{\textbf{Optimal Controller Derivation}}

The controllers in this case are restricted to the form
\begin{align*}
u(t)=\begin{bmatrix}f_{11}(x_1(0:t)) \\ f_{21}(x_1(0:t))+f_{22}(x_2(0:t)) \\f_{31}(x_1(0:t))+ f_{32}(x_2(0:t)) +f_{33}(x_3(0:t)) \end{bmatrix},
\end{align*}
where $f_{ij}$ are linear functions.

Again, the information shared among the controllers is $x_1(0:t)$, and hence each controller can run an estimator using this piece of information. Using this idea, we will decompose the state and control input into two independent terms. Let
\begin{align}
x(t)=z^1(t)+\tilde{z}(t),
\end{align}
where $z^1(t):=\mathbf{E}\{x(t)|x_1(0:t)\}$ and $\tilde{z}(t):=x(t)-z^1(t)$. Clearly, the first component of $z^1$ is $x_1$, and the other components of $z^1$ and $z^2$ will be labeled as:
$$z^1(t)=\begin{bmatrix}x_1(t)\\z^1_2(t)\\z^1_3(t) \end{bmatrix}, \tilde{z}(t)=\begin{bmatrix}0\\z^2_2(t)\\\tilde{z}_3(t) \end{bmatrix}. $$
The control variable will also be decomposed as $u(t)=u^1(t)+u^2(t)$, where $u^1(t):=\mathbf{E}\{u(t)|x_1(0:t)\}, \mbox{and } u^2(t):=u(t)-u^1(t)$ are independent terms. Similar to the argument made for two-vehicle problem, since $u_1(t)$ is function of the history of $x_1$, the first component of $u^1$ is $u_1$. As a result $u^1$ and $u^2$ will have the structure:
\begin{align}
u^1(t)=\begin{bmatrix}u_1(t)\\u^1_2(t)\\u^1_3(t) \end{bmatrix}, \tilde{u}(t)=\begin{bmatrix}0\\u^2_2(t)\\\tilde{u}_3(t) \end{bmatrix},
\end{align}
where \begin{equation} \label{constraints2} \begin{bmatrix}u^2_2(t)\\\tilde{u}_3(t)\end{bmatrix}=\begin{bmatrix}f_{22}(z^2_2(0:t)) \\ f_{32}(z^2_2(0:t))+f_{33}(\tilde{z}_3(0:t))\end{bmatrix}.\end{equation}

Similar to Lemma \ref{lemma2}, the following recursive equations can be found for $z^1$ and $\tilde{z}$:
\begin{align} \label{Z1}
z^1(t+1)&=Az^1(t)+Bu^1(t)+\begin{bmatrix}w_1(t)\\0\\0 \end{bmatrix}, \\
\tilde{z}(t+1)&=A\tilde{z}(t)+B\tilde{u}(t)+\begin{bmatrix}0\\w_2(t)\\w_3(t) \end{bmatrix}. \label{ztilde}
\end{align}
Since the first row in (\ref{ztilde}) is zero, this equation reduces to
\begin{align*}
\begin{bmatrix}
z^2_2(t+1) \\ \tilde{z}_3(t+1)
\end{bmatrix}&=\underbrace{\begin{bmatrix} A_{22}& 0\\ A_{32}&A_{33}\end{bmatrix}}_{\tilde{A}}\begin{bmatrix}
z^2_2(t) \\ \tilde{z}_3(t)
\end{bmatrix}\hspace{-1mm} \\
&+\hspace{-1mm}\underbrace{\begin{bmatrix} B_{2}& 0\\ 0 & B_{3}\end{bmatrix}}_{\tilde{B}}\begin{bmatrix} u^2_{2}\\ \tilde{u}_3\end{bmatrix}\hspace{-1mm}+\hspace{-1mm}\begin{bmatrix} w_2(t)\\w_3(t)\end{bmatrix}.
\end{align*}
This system with the information constraints stated in (\ref{constraints2}), has a similar structure to the two-vehicle problem. Hence, the states and inputs of (\ref{ztilde}) will be decomposed in a similar manner. In this case $z_2^2(0:t)$ is the information which is shared among the controllers, $u^2_{2}$ and $\tilde{u}_3$. Note that since controller 2 and controller 3 have access to $x_1$ and $x_2$, they both can construct $z^2_2(t)=x_2(t)-z_2^1(t)$ at each time step.

We get
\begin{align*}
\tilde{z}(t)&=\underbrace{\mathbf{E}\{\tilde{z}(t)|z_2^2(0:t)\}}_{z^2(t)}+\underbrace{\tilde{z}(t)-\mathbf{E}\{\tilde{z}(t)|z_2^2(0:t)\}}_{z^3(t)},\\
\tilde{u}(t)&=\underbrace{\mathbf{E}\{\tilde{u}(t)|z_2^2(0:t)\}}_{u^2(t)}+\underbrace{\tilde{u}(t)-\mathbf{E}\{\tilde{u}(t)|z_2^2(0:t)\}}_{u^3(t)},
\end{align*}
The components of $z^2(t)$, $z^3(t)$, $u^2(t)$, and $u^3(t)$ can be labeled as
\begin{align*}
z^2(t)\hspace{-1mm}&=\hspace{-1mm}\begin{bmatrix}0\\z^2_2(t)\\z^2_3(t) \end{bmatrix}\hspace{-1mm},z^3(t)\hspace{-1mm}=\hspace{-1mm}\begin{bmatrix}0\\0\\z^3_3(t)\end{bmatrix}, \\
u^2(t)\hspace{-1mm}&=\hspace{-1mm}\begin{bmatrix}0\\u^2_2(t)\\u^2_3(t) \end{bmatrix}\hspace{-1mm},u^3(t)\hspace{-1mm}=\hspace{-1mm}\begin{bmatrix}0\\0\\u^3_3(t) \end{bmatrix}
\end{align*}
where $z^2_3(t):=\mathbf{E}\{\tilde{z}_3(t)|z^2_2(0:t)\}$, $z^3_3(t):=\tilde{z}_3(t)-z^2_3(t)$, $u^2_3(t):=\mathbf{E}\{\tilde{u}_3(t)|u^2_2(0:t)\}$, and $u^3_3(t):=\tilde{u}_3(t)-u^2_3(t)$.
%where \begin{equation} \label{constraints3} \begin{bmatrix}u^2_2(t)\\u^2_3(t)\end{bmatrix}=\begin{bmatrix}f_{22}(z^2_2(0:t)) \\ f_{32}(z^2_2(0:t))+f_{33}(z^2_3(0:t))\end{bmatrix}\end{equation}.

Finally, we get the following orthogonal decomposition of the states and controls
\begin{align*}
x(t)&=\underbrace{\begin{bmatrix}x_1(t)\\z^1_2(t)\\z^1_3(t) \end{bmatrix}}_{z^1(t)}+\underbrace{\begin{bmatrix}0\\z^2_2(t)\\z^2_3(t) \end{bmatrix}}_{z^2(t)}+\underbrace{\begin{bmatrix}0\\0\\z^3_3(t)\end{bmatrix}}_{z^3(t)}, \\
u(t)&=\underbrace{\begin{bmatrix}u_1(t)\\u^1_2(t)\\u^1_3(t) \end{bmatrix}}_{u^1(t)}+\underbrace{\begin{bmatrix}0\\u^2_2(t)\\u^2_3(t) \end{bmatrix}}_{u^2(t)}+\underbrace{\begin{bmatrix}0\\0\\u^3_3(t) \end{bmatrix}}_{u^3(t)},
\end{align*}
where the dynamics of $z^1$ is given in (\ref{Z1}), and the dynamics for the non-zero component of $z^2$ and $z^3$ is given by
\begin{align} \label{Z22}
\begin{bmatrix}
z^2_2(t+1) \\ z^2_3(t+1)
\end{bmatrix}&=\tilde{A}\begin{bmatrix}
z^2_2(t) \\ {z}^2_3(t)
\end{bmatrix}\hspace{-1mm}+\hspace{-1mm}\tilde{B}\begin{bmatrix} u^2_{2}\\ {u}^2_3\end{bmatrix}\hspace{-1mm}+\hspace{-1mm}\begin{bmatrix} w_2(t)\\0\end{bmatrix},\\
z^3_3(t+1)&=A_{33}z_3^3(t)+B_{33}u^3_3(t)+w_3(t). \nonumber
\end{align}
Using Corollary \ref{cor1}, the cost function can be separated into three quadratic forms similar to the ones in (\ref{quadratic1}). Minimization of these quadratic forms gives the optimal controllers as
\begin{align} \label{controllers}
\begin{bmatrix} u_1^*(t) \\ u_2^*(t) \\ u_3^*(t)\end{bmatrix}=
-&L^1(t) \begin{bmatrix} x_1(t) \\ z_2^1(t) \\ z_3^1(t)\end{bmatrix}
-\begin{bmatrix} 0 \\ L^2(t)\begin{bmatrix} z_2^2(t) \\ z_3^2(t)\end{bmatrix} \end{bmatrix}  \nonumber \\
-&\begin{bmatrix}0 \\ 0 \\ L^3(t)z^3_3(t) \end{bmatrix},
\end{align}
where the gain matrices, $L^1$, $L^2$, and $L^3$ are given by
\begin{align*}
L^1(t)&=(R+B^TX^1(t)B)^{-1}B^TX^1(t)A, \\
L^2(t)&=(\tilde{R}+\tilde{B}^TX^2(t)\tilde{B})^{-1}\tilde{B}^TX^2(t)\tilde{A}, \\
L^3(t)&=(R_{33}+B_3^TX^3(t)B_3)^{-1}B_3^TX^3(t)A_{33},
\end{align*}
and $X^1$, $X^2$, and $X^3$ are the solutions to the Riccati equations
\begin{align*} X^1(t)=&A^TX^1(t+1)A+Q-A^TX^1(t+1)B\times    \\
                      &(B^TX^1(t+1)B+R)^{-1}B^TX^1(t+1)A, \\
               X^2(t)=&\tilde{A}^TX^2(t+1)\tilde{A}+\tilde{Q}-\tilde{A}^TX^2(t+1)\tilde{B}\times \\
                     & (\tilde{B}^TX^2(t+1)\tilde{B}+\tilde{R})^{-1}\tilde{B}^TX^2(t+1)\tilde{A}, \\
               X^3(t)=&A_{33}^TX^3(t+1)A_{33}+Q_{33}-A_{33}^TX^3(t+1)B_3\times  \\
               &(B_3^TX^3(t+1)B_3+R_{33})^{-1}B_{3}^TX^3(t+1)A_{33},
\end{align*}
where $\tilde{Q}=Q{[2:3,2:3]}$ and $\tilde{R}=R{[2:3,2:3]}$. $X^1(N)=Q$, $X^2(N)=Q{[2:3,2:3]}$, and $X^3(N)=Q_{33}$.

To find the mapping from $x$ to $u$, the update equations for the terms $z^1_2$, $z^1_3$, and $z^2_3$ must be obtained. By closing the loop in (\ref{Z1}) and (\ref{Z22}) by the corresponding controllers we obtain
\begin{align}
\begin{bmatrix} z^1_2(t+1) \\ z^1_3(t+1)\end{bmatrix}&=(A-BL^1){[2:3,1:3]}\begin{bmatrix}x_1(t)\\z^1_2(t)\\z^1_3(t)\end{bmatrix}, \\
z^2_3(t+1)&=(\tilde{A}-\tilde{B}L^2){[2,1:2]}\begin{bmatrix}x_2(t)-z^1_2(t)\\z^2_3(t)\end{bmatrix},
\end{align}
and finally the time-varying version of the controllers can be rewritten as
\begin{align} \label{controllers2}
\begin{bmatrix} u_1^*(t) \\ u_2^*(t) \\ u_3^*(t)\end{bmatrix}=
-&L^1(t) \begin{bmatrix} x_1(t) \\ z_2^1(t) \\ z_3^1(t)\end{bmatrix}
-\begin{bmatrix} 0 \\ L^2(t)\begin{bmatrix} x_2(t)-z_2^1(t) \\ z_3^2(t)\end{bmatrix} \end{bmatrix}  \nonumber \\
-&\begin{bmatrix}0 \\ 0 \\ L^3(t)(x_3(t)-z^1_3(t)-z^2_3(t)) \end{bmatrix}.
\end{align}
By letting $N$ go to infinity, the controllers converge to the stationary form given in Theorem 2. To compute the optimal cost in this case, let us partition the matrix $X^1$ into blocks $[X^1_{ij}],~i,j=1, 2, 3$ in accordance with the partitions of $A$, and do the same with $X^2$ according to the partitions of $\tilde{A}$ that is $X^2=[X^2_{ij}], i,j=1,2$.

Then, the infinite-horizon optimal cost is obtained by
\begin{align*}\lim_{N \rightarrow \infty} \frac{1}{N}\sum_{t=0}^{N-1}(&\mathbf{Tr}(X^1_{11}(t+1)W_1)+\mathbf{Tr}(X_{11}^2(t+1)W_2)) \\
+&\mathbf{Tr}(X^3(t+1)W_3)) \\
=&\mathbf{Tr}(X^1_{11}W_1)+\mathbf{Tr}(X_{11}^2W_2)+\mathbf{Tr}(X^3W_3).
\end{align*}

\section{\uppercase{numerical results}}
\label{sec:Simulations}

In this section, we implement the proposed controller on an $M=3$ HDV platoon (Figure~\ref{fig:platoon}) and evaluate the performance through a realistic scenario that HDV platoons often face on the road. We assume that the vehicles in the platoon can only measure the velocity and relative distance of the preceding vehicle and only receive information through wireless communication of all the preceding vehicles. This assumption is made to evaluate if a small addition in communication links could improve the system performance. Hence, a numerical comparison is made between the proposed controller and a suboptimal controller specifically designed for HDV platooning \cite{Alam11:Dec}.
The suboptimal controller uses local information, namely it only accounts for the dynamics of the preceding vehicle. Finally, we compare the proposed controller with the fully centralized linear quadratic controller.

When studying the behavior of vehicles within a finite platoon, the velocity does not deviate significantly from the lead vehicle's velocity trajectory. The control strategy is simply to provide an input that maintains the platoon velocity at a set relative distance. In practice, many random disturbances such as wind variation, changing topology, or varying road properties are inflicted upon the system. These disturbances are modeled as disturbances in state measurements. An additional disturbance of interest is a mandated deviation in the lead vehicles velocity. This often occurs due to varying traffic events that the lead vehicle must adhere to. Hence, integral action for the lead vehicle is also added as a state to the system presented in \eqref{eq:plantModel}, to model such disturbances. The system for a $M=3$ HDV platoon can thereby be grouped into sub-blocks, as in \eqref{eq:GeneralPlantModel}, for controller design, where

\begin{align*}
\begin{split}
A_{11}&=\begin{bmatrix} 0 & -1\\
0 & 1\end{bmatrix},~A_{ii}=\begin{bmatrix} 1 & -1\\
\delta_i & \theta_i\end{bmatrix},~A_{i(i-1)}=\begin{bmatrix} 0 & 1\\
0 & 0\end{bmatrix}\\
B_1&=\begin{bmatrix} 0\\
k_{u_1}\end{bmatrix},~B_2=\begin{bmatrix} 0\\
k_{u_2}\end{bmatrix},~B_3=\begin{bmatrix} 0\\
k_{u_3}\end{bmatrix},~i=2,3.
\end{split}
\end{align*}

The modeled HDVs are described as traveling in a longitudinal direction on a flat road. We consider a heterogeneous platoon, where the masses are set to $[m_1, m_2, m_3]=[30000, 40000, 30000]$\,kg. All the vehicles are assumed to be traveling in the steady state velocity  $v_0=19.44$\,m/s ($70$\,km/h) at time gap $\tau=1$\,s, which gives an intermediate distance of $d_0=19.44$. The maximum engine and braking torque for a commercial HDV varies based upon vehicle configuration but can be approximated to be 2500\,Nm and 60000\,Nm/Axle respectively.

State disturbances as well as several lead vehicle deviation disturbances are imposed on the system, see Figure~\ref{fig:disturbance}. The lead vehicle deviation disturbances can be explained by the following scenario. The platoon travels along a road where the road speed is 70\,km/h. Suddenly a slower vehicle enters the lane through a shoulder path (at the 45\,s time marker). The lead vehicle must therefore reduce its speed to 60\,km/h, in turn forcing the follower vehicles to reduce their speed and adapt their relative distance accordingly. After a while, the slower vehicle increases its speed to the road speed of 70\,km/h and no longer inhibits the platoon (120\,s time marker). Hence, the lead vehicle again resumes the road speed and the follower vehicles adapt the speed and distance automatically as well. Finally, the platoon arrives at a point where the road speed is changed to 80\,km/h (180\,s time marker).

Figure~\ref{fig:disturbance} shows the velocity trajectories of three HDV platoon in the top plot and the corresponding intermediate spacings in this scenario. The trajectories obtained through the optimal decentralized controller are bold. The trajectories are also plotted, with thinner lines, for the suboptimal decentralized controller. We see that the proposed optimal controller displays a good performance. The suboptimal controller displays a slightly harsher behavior with a faster speed change, since it does not take follower vehicles into account. Hence, the required relative control input energy is much higher for the suboptimal controller compared to our proposed controller, as can been seen in Figure~\ref{fig:inputTorque}. The first two rows in Table~\ref{tb:inputs} states the total control input energy required to handle the imposed disturbances. We can see that the optimal distributed controller reduces the control input energy by 10.4\,\% for the lead vehicle, by 16.3\,\% for the second vehicle, and by 15.5\,\% for the third vehicle. By estimating the states of the follower vehicles, the proposed controller mimics a centralized control strategy and displays a smoother behavior. However, the reduced control energy is obtained at the cost of adding a communication link between vehicle~1 and vehicle~3, since vehicle~1's state cannot be measured through the mounted radar on vehicle~3. Furthermore, the average velocity is reduced by $2.6$\,\%. Travel time is equally important for fleet operators. However, it is clear that there is a considerable saving in the fuel consumption at the cost of additional communication links and a much smaller reduction in travel time.

\begin{figure}[t]
\begin{center}
\includegraphics[width=7.4cm]{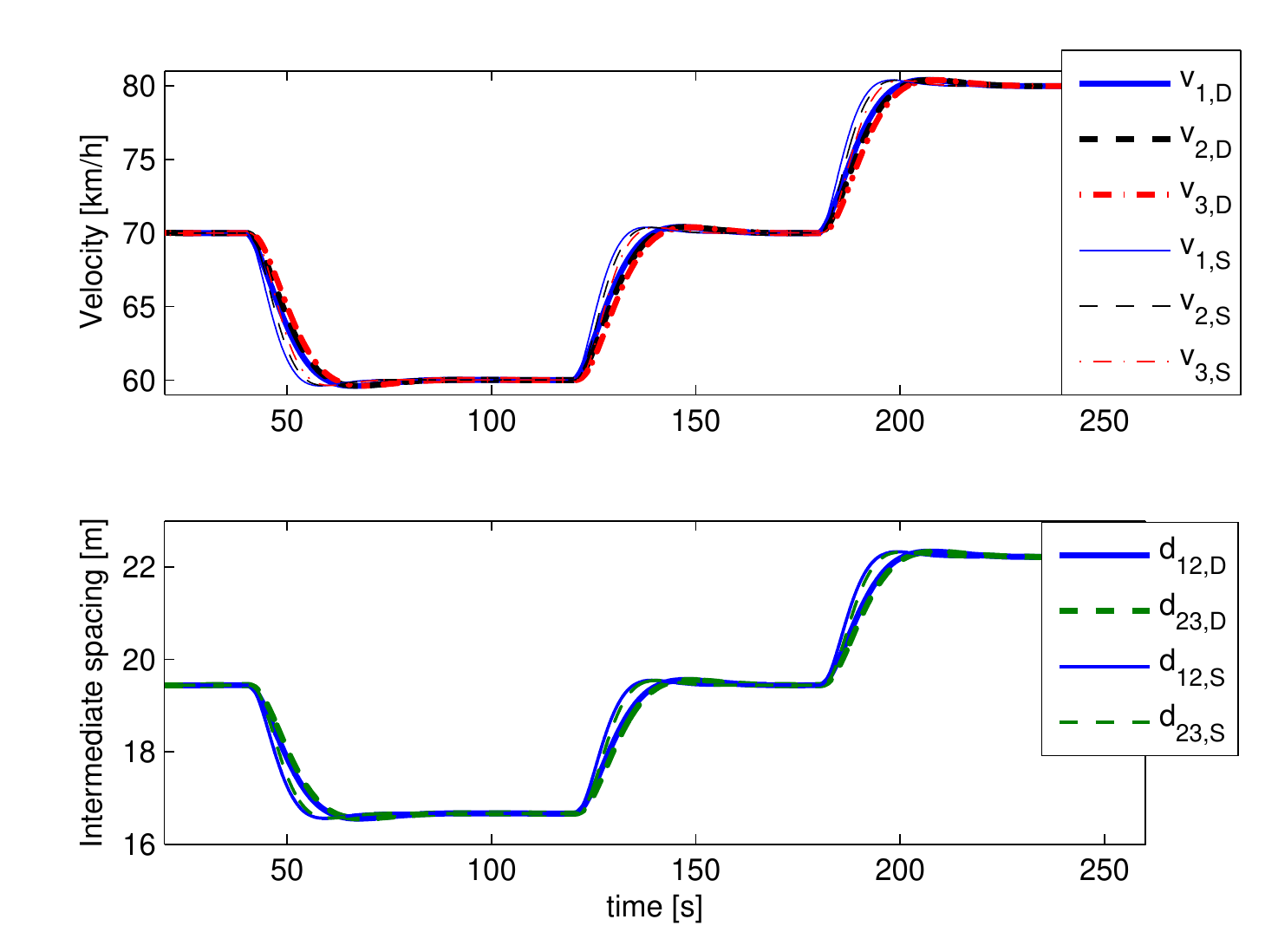}    % The printed column width is 8.4 cm.
\caption{Three HDV platoon, where a disturbance in velocity of the lead vehicle is imposed. The top plot shows the velocity trajectories for the $M=3$ HDV platoon and the bottom plot shows the intermediate spacings. The trajectories obtained through the optimal decentralized controller are bold and subindexed with $i,D$ and the trajectories obtained through the suboptimal controller are subindexed with $i,S$, where $i=1,2,3$ denote the platoon position index.}
\label{fig:disturbance}
\end{center}
\end{figure}

\begin{figure}[t]
\begin{center}
\includegraphics[width=7.4cm]{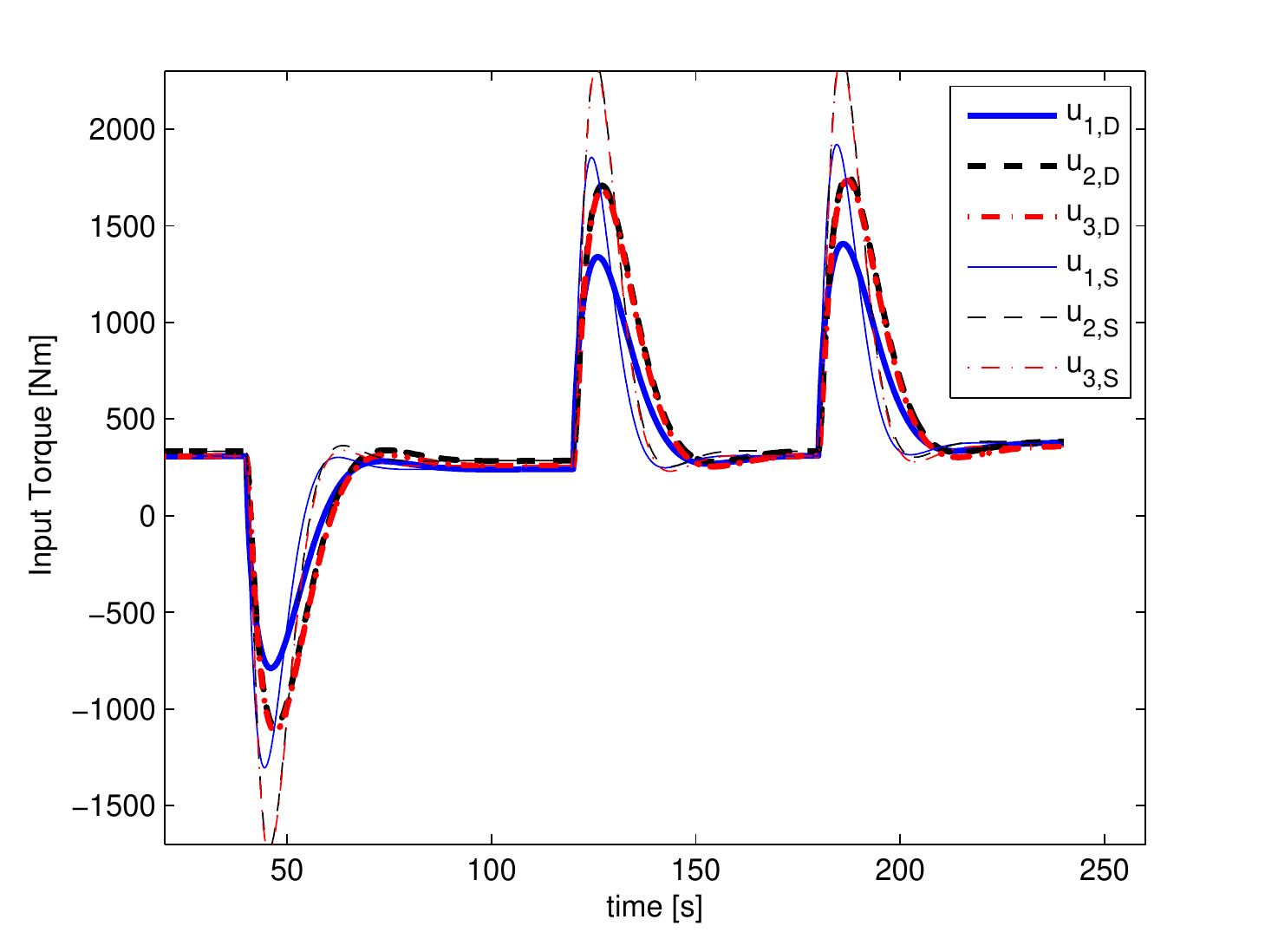}    % The printed column width is 8.4 cm.
\caption{Corresponding input torque to handle the imposed disturbances in Figure~\ref{fig:disturbance}. Similarly, the trajectories obtained through the optimal decentralized controller are bold and subindexed with $i,D$ and the trajectories obtained through the suboptimal controller are subindexed with $i,S$, $i=1,2,3$.}
\label{fig:inputTorque}
\end{center}
\end{figure}

Furthermore, the results in the last four rows of Table~\ref{tb:inputs} show that the required control input to handle the disturbances are well within the feasible physical range. The heaviest vehicle with platoon index $i=2$ naturally requires the largest control input to handle the disturbances. However, it is also seemingly where the highest relative difference in cost is obtained. By estimating the states of the follower vehicles, the mandated control input to handle the presented disturbances can be reduced significantly. Both the maximum and minimum values are lowered in the control input requirement for the optimal decentralized controller.

The optimal decentralized controller was also compared with a centralized control strategy. Since the proposed controller also accounts for all the states in the platoon by estimating the states of the follower vehicles, the behavior is close to the centralized controller. The computed relative differences in the cost function as well as the difference in required control inputs to handle the disturbances are minimal.
\begin{table}
\begin{center}
%\captionfont
\caption{Table of the required control input (Torque) to handle the disturbances in Figure~\ref{fig:inputTorque}.}
\label{tb:inputs}
\begin{tabular}{c|c|c|c}
$i$ & 1 & 2 & 3 \\\hline
$||u_{i,D}||_2$ [kNm] & 81.9 & 100.1 & 80.8 \\
$||u_{i,S}||_2$ [kNm] & 91.4 & 112.6 & 89.0 \\
$u_{i,D}^{max}$ [kNm] & 1.41 & 1.76 & 1.37 \\
$u_{i,S}^{max}$ [kNm] & 1.92 & 2.39 & 1.8 \\
$u_{i,D}^{min}$ [kNm] & -0.79 & -1.09 & -0.75 \\
$u_{i,S}^{min}$ [kNm] & -1.3 & -1.72 & -1.19 \\ \hline
\end{tabular}
\end{center}
\end{table}

\section{\uppercase{Conclusions}}
\label{sec:conclusion}
We have presented a quadratic optimal distributed control method for chain structures with applications to heterogeneous vehicle platooning under communication constraints. A procedure has been given for constructing low order optimal decentralized controllers through a simple decomposition scheme. A discrete HDV platoon model has been derived that includes physical coupling between the vehicles upon which the controllers are evaluated. The results show that the total control input energy required for the proposed controller is very close to a centralized controller where communication is needed among all the vehicles, and is significantly lower compared to a suboptimal controller which only accounts for the immediate preceding vehicle. In particular, by estimating the interaction with the follower vehicles, performance can be improved by adding a communication link from the first to the third vehicle in a three-vehicle platoon. Thus, considering preceding vehicles as well as follower vehicles is significant for fuel optimality.

A natural extension to the presented work is to derive explicit solutions for the problem of $M$-HDVs. Also, it would be interesting to consider time delays in the communication links between the vehicles. It is planned for future work.

%%%%%%%%%%%%%%%%%%%%%%%%%%%%%%%%%%%%%%%%%%%%%%%%%%%%%%%%%%%%%%%%%%%%%%%%%%%%%%%%
\bibliographystyle{plain}
\bibliography{References}
\

\section{\uppercase{Appendix}}

\emph{Proof of Lemma 2}:

We have $z^1(t+1)=\mathbf{E}\{x(t+1)|x_1(0:t+1)\}=\mathbf{E}\{x(t+1)|x_1(0:t),x_1(t+1)\}$.
To evaluate the conditional expectation given $x_1(0:t)$ and $x_1(t+1)$, we change the variables so that we get independent variables. Note that we can construct $x_1(t+1)=A_{11}x_1(t)+B_1u_1(t)+w(t)$ given that $x_1(0:t)$ and $w_1(t)$ are available. Hence, instead of evaluating the conditional expectation of $x(t+1)$ given $x_1(0:t)$ and $x_1(t+1)$, we will evaluate the expectation given the independent variables $x_1(0:t)$ and $w_1(t)$. In other words, $w_1(t)$ is part of $x_1(t+1)$ which was not previously available in $x_1(0:t)$. Thus we find that,
\begin{align*}
z^1(t+1)=&\mathbf{E}\{Ax(t)+Bu(t)+w(t)|x_1(0:t), w_1(t)\} \nonumber  \\
        =&\mathbf{E}\{Ax(t)+Bu(t)+w(t)|x_1(0:t)\} \nonumber  \\
        +&\mathbf{E}\{Ax(t)+Bu(t)+w(t)|w_1(t)\}  \nonumber \\
        =&Az^1(t)+Bu^1(t)+\begin{bmatrix}w_1(t)\\ 0 \end{bmatrix},\end{align*}
where we also used the pairwise independence of $w_1(t)$, $w_2(t)$, $x(t)$, and $u(t)$.
So, $z^2$ is given by
\begin{align*}  z^2(t+1)&=x(t+1)-\mathbf{E}\{x(t+1)|x_1(0:t+1)\} \nonumber \\
                       &=Az^2(t)+Bu^2(t)+\begin{bmatrix}0\\ w_2(t) \end{bmatrix}.\end{align*}

\end{document}